\newtheorem{theorem}{Theorem}
\newtheorem{lemma}{Lemma}
\newtheorem{conjecture}[lemma]{Conjecture}
\newtheorem{corollary}{Corollary}
\newtheorem*{theorem*}{Theorem}
\theoremstyle{remark}
\newtheorem{question}[lemma]{Question}
\newtheorem*{observation*}{Observation}
\renewcommand*{\@seccntformat}[1]{%
  \csname the#1\endcsname.
}
\renewcommand\section{\@startsection {section}{1}{\z@}
                                   {-3.5ex \@plus -1ex \@minus -.2ex}%
                                   {2.3ex \@plus.2ex}%
                                   {\centering\normalsize\scshape}}
\renewcommand{\subsection}[1]{%
  \pagebreak[2]
  \refstepcounter{subsection} \addcontentsline{toc}{subsection}{
    {\protect\makebox[0.3in][r]{\thesubsection.} #1}}
  \noindent
  \textbf{\thesubsection.\ {#1}.}
  \everypar={}%
}
\renewcommand{\subsubsection}[1]{%
  \pagebreak[2]
  \refstepcounter{subsubsection} \addcontentsline{toc}{subsubsection}{
    {\protect\makebox[0.3in][r]{\thesubsubsection.} #1}}
  \noindent
  \textbf{\thesubsubsection.\ {#1}.}
  \everypar={}%
}
\begin{document}
\title{Infinitesimal part of Weak Lefschetz using Milnor $K-$ theory}  
\author{Jagannathan Arjun Sathyamoorthy }
\date{}
\maketitle
\begin{abstract}
Let $X$ be a smooth projective variety over a field of characteristic $0$ and $L$ an ample divisor. In this paper we study the Weak Lefschetz conjecture for Chow groups using the technique employed by Grothendieck in his study of the problem for Picard groups, and using Bloch's formula to interpret Chow groups in terms of Milnor $K$- theory we prove the infinitesimal part of the conjecture.  
\end{abstract}
We begin by stating the well known conjecture 
\begin{conjecture}[Weak Lefschetz] For $X/k$ a smooth projective variety where $char(k)=0$ and $L$ is an ample divisor, the natural restriction map induces 
\begin{equation}
{
CH^p(X) \to CH^p(L)
}
\end{equation}   
an isomorphism when $2p<dim(L)$ and is injective at $2p= dim(L)$ modulo torsion.

\end{conjecture}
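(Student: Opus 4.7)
The plan is to mirror Grothendieck's SGA~2 treatment of Weak Lefschetz for $\mathrm{Pic}$. Using Bloch's formula $CH^p(X) \cong H^p(X, \mathcal{K}^M_p)$, valid in characteristic $0$ for smooth schemes, the restriction map becomes a map of Zariski cohomology groups of Milnor $K$-sheaves, and I factor it through the formal completion $\hat{X}$ of $X$ along $L$:
\[
H^p(X, \mathcal{K}^M_{p,X}) \;\xrightarrow{\alpha}\; H^p(\hat{X}, \widehat{\mathcal{K}^M_p}) \;\xrightarrow{\beta}\; H^p(L, \mathcal{K}^M_{p,L}) \cong CH^p(L),
\]
where the middle group is defined as $\varprojlim_n H^p(L_n, \mathcal{K}^M_{p,L_n})$ over the infinitesimal thickenings $L_n$ of $L$ in $X$. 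The proof amounts to showing that both $\alpha$ and $\beta$ are isomorphisms (resp.\ injective modulo torsion) in the specified ranges.

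First I would dispose of $\beta$. On each square-zero extension $L_n \hookrightarrow L_{n+1}$, the kernel of $\mathcal{K}^M_{p,L_{n+1}} \twoheadrightarrow \mathcal{K}^M_{p,L_n}$ is identified, via the van~der~Kallen/Bloch--Kato computation in characteristic zero, with $\Omega^{p-1}_{L} \otimes_{\mathcal{O}_L} I^n/I^{n+1}$, where $I = \mathcal{O}_X(-L)|_L$ is ample. Kodaira--Akizuki--Nakano then gives $H^p(L, \Omega^{p-1}_L \otimes \mathcal{O}_L(-nL)) = 0$ whenever $(p-1) + p < \dim L$, i.e.\ $2p \leq \dim L$, and similarly $H^{p+1} = 0$ in the strict range $2p < \dim L$. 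Running the long exact sequences and taking the inverse limit (with Mittag--Leffler provided by the same vanishing applied in degree $p-1$) shows $\beta$ is an isomorphism when $2p < \dim L$ and injective when $2p = \dim L$.

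The step $\alpha$, the algebraization, is the heart of the matter and the analogue of Grothendieck's conditions $\mathrm{Lef}(X,L)$ and $\mathrm{Leff}(X,L)$. Since $\mathcal{K}^M_p$ is not coherent, formal GAGA does not apply directly, so I would instead work with the Gersten resolution
\[
0 \;\to\; \mathcal{K}^M_{p,X} \;\to\; \bigoplus_{x \in X^{(0)}} (i_x)_\ast K^M_p(\kappa(x)) \;\to\; \bigoplus_{x \in X^{(1)}} (i_x)_\ast K^M_{p-1}(\kappa(x)) \;\to\; \cdots,
\]
and its counterpart on $\hat{X}$. Comparing the two resolutions reduces $\alpha$ to controlling the local cohomology groups $H^{p}_{X \setminus L}(X, \mathcal{K}^M_p)$ and $H^{p+1}_{X \setminus L}(X, \mathcal{K}^M_p)$, which is an Ogus-style cohomological Lefschetz condition for Milnor $K$. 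My approach is to combine the Gabber--Kerz rigidity and purity results for Milnor $K$-theory on henselian local rings (pinning down the stalks away from $L$) with a codimension spectral sequence whose $E_1$-terms are controlled by Kodaira-type vanishing for the residue fields' contribution, using ampleness of $L$ to force the required vanishing in the target range.

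The principal obstacle is precisely this purity-to-algebraization passage: unlike in the coherent case, the Gersten spectral sequence mixes the sheaves $\mathcal{K}^M_q$ for all $q \leq p$, so one cannot finish by a single Kodaira-style vanishing and must instead assemble a uniform statement across the full filtration by codimension. Establishing these local cohomology vanishings in the range $2p < \dim L$ (and with only torsion error at the borderline $2p = \dim L$, matching the statement) is the essential new input needed to upgrade the infinitesimal comparison carried out by $\beta$ into the full conjecture, and it is where I expect the hardest technical work to concentrate.
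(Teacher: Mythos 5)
Your decomposition of the restriction map through the formal completion is the right frame, and your treatment of the step you call $\beta$ --- identifying the kernel of $\mathcal{K}^M_{p,L_{n+1}} \twoheadrightarrow \mathcal{K}^M_{p,L_n}$ with $\Omega^{p-1}\otimes_{\mathcal{O}_L}\mathcal{I}^n/\mathcal{I}^{n+1}$ via van der Kallen and Bloch, killing its cohomology by Kodaira--Akizuki--Nakano, and passing to the limit with Mittag--Leffler and the EGA III comparison theorem --- is essentially identical to what this paper actually carries out. But that is \emph{all} the paper carries out: the statement you are trying to prove is labelled a conjecture precisely because only this infinitesimal comparison is established (it is the content of the paper's final theorem), and the full statement is not claimed anywhere.

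The genuine gap is your step $\alpha$. Nothing in your proposal proves it: you name a strategy (Gersten resolutions on $X$ and on $\hat{X}$, local cohomology along $X\setminus L$, Gabber--Kerz purity, a codimension spectral sequence) and then correctly observe that the required local-cohomology vanishings are ``the essential new input,'' i.e.\ they are not supplied. This is not a technical loose end but the entire difficulty of the problem. In Grothendieck's $p=1$ case the passage from $\operatorname{Pic}(\hat{X})$ to $\operatorname{Pic}(X)$ rests on the existence theorem for formal \emph{coherent} sheaves: a formal line bundle is algebraized to an honest line bundle on an open neighbourhood of $L$ and then extended across the complement using depth/Lefschetz conditions. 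The sheaves $\mathcal{K}^M_p$ are not coherent, the Gersten complex on the formal scheme is not the completion of the Gersten complex on $X$ (its terms are indexed by points of $\hat{X}$, which sees only points specializing into $L$), and no algebraization statement for formal cycles of codimension $\geq 2$ is known. Until those vanishings, or some substitute for coherent algebraization, are actually established, what you have is a proof of the infinitesimal half --- the same half the paper proves --- together with a research program, not a proof, for the other half.
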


For the case $p=1$ it is the Grothendieck-Lefschetz theorem, which is proved as follows:

The Chow group of divisors can be identified with the Picard group. Grothendieck shows that each of the arrows in the following sequence are isomorphisms $Pic(X) \to Pic(\widehat{X}) \to Pic(L)$ where $\widehat{X}$ is the formal completion of $X$ along $L$. He achieves this by lifting $L$ to the formal completion $\widehat{X}$ and then using Lefschetz conditions to extend to whole of $X$.  

We denote the Milnor $K$-groups by $K_{*,M}$ and always consider $K_{*,M}$ only up to torsion, i.e., we denote $K_{p,M}(A) = K_{p,M} (A)\otimes_{\mathbb{Z}} \mathbb{Q}$. So we  ask the following question:
\begin{question}
Let us consider $X/k$ an algebraic variety s.t. $\dim(X) \geq 4$ and $char(k)=0$, and consider an ample divisor $L$. We denote by $\mathcal{K}_{p,M}$ the Milnor $K$-sheaf which is the sheaf associated to the presheaf $ U \mapsto K_{p,M}(\mathcal{O}_X(U))$ where $K_{p,M}(A)$ is the Milnor $K$ theory of the ring $A$. We then want to know when
\begin{equation}
{
H^q(\widehat{X}, \varprojlim \mathcal{K}_{p,M}(\mathcal{O}_{L_n})) \to H^q(L, \mathcal{K}_{p,M}(\mathcal{O}_L))
}
\end{equation}
is an isomorphism?
\end{question}
The idea would be to try to use the same technique on the above question as Grothendieck's algebraization theorem.  We study the cohomology of the sheaf $\Phi_{p,n}$ defined as 
\begin{equation}
{
0 \to \Phi_{p,n} \to \mathcal{K}_{p,M}({\mathcal{O}_{L_{n+1}}}) \to \mathcal{K}_{p,M}({\mathcal{O}_{L_{n}}}) \to 0
}
\end{equation}
and explicitly calculate it and see when it is zero. 

We can study just at the level at stalks and hence can localize to a suitable neighbourhood of a point $*$ of $L$. This gives us the following sequence
\begin{equation}
{
0 \to \phi_{p,n} \to K_{p,M}(A/\sigma^{n+1}) \to K_{p,M}(A/\sigma^n) \to 0
}
\end{equation} 
where $\sigma$ is the local generator at the point $*$ of the locally principal ideal $I$ and $A$ is the localization at the prime corresponding to $*$ of the $k-$algebra $k[x_1,...,x_{\dim(X)}]/(\sigma)$ i.e. the local ring of $L$ at $*$. 

We predominantly use ideas from Bloch, S. \cite{B}
We start by understanding the case for a truncated polynomial algebra $A[\sigma]/\sigma^n$ where $\sigma$ is an indeterminate and by abuse of notation continue using $\phi_{p,n}$.  This will allow us to use van der Kallen's result stated in Bloch's paper
\begin{theorem}[van der Kallen]

Let $A$ be a local ring with $1/2 \in A$ and $\sigma$ an indeterminate. Then the Formal Tangent Space $TK_{p,M}(A)$ defined as 
\begin{center}
{
$TK_{p,M}(A) = ker \; \{ K_{p,M} (A[\sigma])/(\sigma^2) \to K_{p,M}(A) \}$
}
\end{center}
has a natural $A$-module structure and is $A$-isomorphic to $\Omega^{p-1}_{A/\mathbb{Q}}$.

\end{theorem}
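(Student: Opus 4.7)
The plan is to construct an explicit $A$-linear isomorphism $\psi:\Omega^{p-1}_{A/\mathbb{Q}}\to TK_{p,M}(A)$ sending differential forms to Milnor symbols involving dual numbers, and to verify bijectivity by exhibiting an inverse on a convenient set of generators.

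First I would show that $TK_{p,M}(A)$ is additively generated by symbols of the form $\{1+a\sigma,b_1,\ldots,b_{p-1}\}$ with $a\in A$ and $b_i\in A^{\times}$. Every unit of $A[\sigma]/(\sigma^{2})$ factors uniquely as $u=v(1+c\sigma)$ with $v\in A^{\times}$ and $c\in A$, so by the multilinearity of Milnor symbols any length-$p$ symbol splits into a sum whose summands involve either genuine units in $A$ or dual-number factors $1+c\sigma$. The summands with no dual-number factors lie in the image of $K_{p,M}(A)\hookrightarrow K_{p,M}(A[\sigma]/(\sigma^{2}))$ split by the augmentation, and so cancel out on the kernel. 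Using the identity $(1+a\sigma)(1+b\sigma)=1+(a+b)\sigma$ in $A[\sigma]/(\sigma^{2})$ together with the anti-symmetry of Milnor symbols, one reduces every element with more than one dual-number slot to one with exactly one such slot; this reduction is where the hypothesis $1/2\in A$ first enters, via the square-root identity $1+a\sigma=(1+a\sigma/2)^{2}$ in $A[\sigma]/(\sigma^{2})$ and the Milnor relation $\{x,x\}=\{-1,x\}$ to neutralize residual $\{-1,\cdot\}$ terms.

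Next I would define $\psi$ on generators of $\Omega^{p-1}_{A/\mathbb{Q}}$ by
$$\psi\bigl(a\,d\log b_1\wedge\cdots\wedge d\log b_{p-1}\bigr)=\{1+a\sigma,\,b_1,\ldots,b_{p-1}\},$$
noting that $\Omega^{p-1}_{A/\mathbb{Q}}$ is spanned by such forms because $A$ is local. Well-definedness requires checking that $\psi$ respects additivity in $a$ (immediate from $(1+a\sigma)(1+a'\sigma)=1+(a+a')\sigma$), the alternating property in the $b_i$ (from anti-symmetry of Milnor symbols), and the logarithmic Leibniz relation $d\log(bc)=d\log b+d\log c$ (from multilinearity of Milnor symbols). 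For the inverse, I would define $\varphi$ on the generators produced in step one by
$$\varphi\bigl(\{1+a\sigma,b_1,\ldots,b_{p-1}\}\bigr)=a\,d\log b_1\wedge\cdots\wedge d\log b_{p-1},$$
and verify compatibility with the defining relations of Milnor K-theory, the Steinberg relation being the delicate point: since $1-(1+a\sigma)=-a\sigma$ is not a unit, Steinberg must be invoked in the form $\{u,1-u\}=0$ for units $u$ involving dual numbers, then expanded by multilinearity and matched term by term against known Kähler relations.

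Finally the $A$-module structure is either transported from $\Omega^{p-1}_{A/\mathbb{Q}}$ via $\psi$, or described intrinsically by $c\cdot\{1+a\sigma,b_1,\ldots,b_{p-1}\}:=\{1+ca\sigma,b_1,\ldots,b_{p-1}\}$, with well-definedness following from the reduction in step one. The main obstacle is precisely that reduction-and-generation step: proving that multiple dual-number factors can always be consolidated into a single one, and that the resulting relations match exactly those cutting out $\Omega^{p-1}_{A/\mathbb{Q}}$. Both halves of this reduction rely essentially on the hypothesis $1/2\in A$, which is what allows the square-root manoeuvre needed to kill the $\{-1,\cdot\}$ obstruction that otherwise survives in the Milnor K-theory computations.
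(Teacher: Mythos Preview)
The paper does not actually prove this theorem: immediately after the statement it says ``We omit the proof in the following exposition as one can find it Green and Griffiths \cite{GG}.'' So there is no in-paper argument to compare against; the result is simply quoted.

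Your sketch follows the standard route one finds in the literature (and essentially the one in \cite{GG}): reduce to generators with a single dual-number slot, write down the dictionary $\{1+a\sigma,b_1,\ldots,b_{p-1}\}\leftrightarrow a\,d\log b_1\wedge\cdots\wedge d\log b_{p-1}$, and check well-definedness in both directions. A few points deserve more care than you give them. First, well-definedness of $\psi$ is not only a matter of additivity in $a$, alternation in the $b_i$, and $d\log(bc)=d\log b+d\log c$: the presentation of $\Omega^{p-1}_{A/\mathbb{Q}}$ in terms of logarithmic generators has further relations (coming ultimately from $d(b+c)$ expressed via units), and these must be shown to map to Milnor relations. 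Second, for the inverse $\varphi$ the genuine Steinberg instances that matter are those of the shape $\{u(1+a\sigma),\,1-u(1+a\sigma)\}=0$ with $u\in A^\times$ and $1-u\in A^\times$; expanding these by multilinearity produces the identity $a\,d\log u - \tfrac{ua}{1-u}\,d\log(1-u)=a\,d\log u + a\,d\log(1-u)+$ (terms), and matching this to K\"ahler relations is the real computation. Your outline names the difficulty but does not carry it out. Third, the ``square-root'' trick $1+a\sigma=(1+\tfrac{a}{2}\sigma)^2$ is used to kill symbols with two dual-number slots via $\{1+a\sigma,1+b\sigma\}=2\{1+\tfrac{a}{2}\sigma,1+b\sigma\}$ and anti-symmetry, but you should say explicitly why the resulting $\{-1,\cdot\}$ contributions vanish in the kernel (they lie in the image of $K_{p,M}(A)$ and are split off). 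None of these are fatal gaps, but as written your proposal is an outline rather than a proof.
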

We omit the proof in the following exposition as one can find it Green and Griffiths \cite{GG}. We now state a corollary to van der Kallen's result also stated by Bloch \cite{B}
\begin{corollary}
{
$\phi_{2,n} = ker \; \{ K_{2,M}(A[\sigma]/\sigma^{n+1}) \to K_{2,M}(A[\sigma]/\sigma^n) \}$ is generated by symbols of the form $\{ 1 + c \sigma^n, a \}$ and $\{ 1 + e\sigma^n, 1 - \sigma \} $ for $a,e \in A^*$ and $c \in A$
}
\end{corollary}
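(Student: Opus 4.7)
I would regard $A[\sigma]/\sigma^{n+1}$ as a square-zero extension of $B := A[\sigma]/\sigma^{n}$ with kernel ideal $(\sigma^n)$, which is free of rank one over $A$ and acquires a $B$-module structure through the augmentation $B \twoheadrightarrow A$ (since $\sigma \cdot \sigma^n = 0$). The strategy is to identify $\phi_{2,n}$ with a quotient of the tangent space $TK_{2,M}(B)$ (the natural generalization of van der Kallen's theorem to the present square-zero extension), apply van der Kallen's theorem to $B$ so as to write $TK_{2,M}(B) \cong \Omega^{1}_{B/\mathbb{Q}}$, and then read off the claimed generators from a generating set of this module of differentials.

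\textbf{Main steps.} The first step is to show every element of $\phi_{2,n}$ can be written as a sum of symbols $\{1+c\sigma^n, v\}$ with $c \in A$ and $v \in (A[\sigma]/\sigma^{n+1})^{*}$. The key tool is the canonical factorization $u = u_{0}\prod_{i=1}^{n}(1+c_{i}\sigma^{i})$ of any unit, obtainable inductively in the nilpotent ideal. Expanding a representing sum for $\alpha \in \phi_{2,n}$ via bilinearity of the symbol, the atomic pieces with no $1+c_n\sigma^n$ factor project to a relation in $K_{2,M}(A[\sigma]/\sigma^n)$; lifting that relation back to $K_{2,M}(A[\sigma]/\sigma^{n+1})$ introduces only $\sigma^n$-discrepancies of the desired shape $\{1+c\sigma^n, v\}$. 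The second step factors the second slot $v = v_{0}\prod_{j}(1+d_{j}\sigma^{j})$, and bilinearity splits $\{1+c\sigma^{n}, v\}$ into $\{1+c\sigma^{n}, v_{0}\}$ (form (a)) plus $\sum_{j}\{1+c\sigma^{n}, 1+d_{j}\sigma^{j}\}$. Since $A$ is local with $\mathbb{Q} \subset A$, any $c \in A$ is a sum of two units, so it then suffices to reduce each remaining symbol to forms (a) and (b).

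\textbf{Main obstacle.} The case-by-case reduction proceeds through the identification $\phi_{2,n} \cong \Omega^{1}_{B/\mathbb{Q}} \otimes_{B} A$: for $j=n$, the identity $\{u,u\} = \{-1,u\}$ (which survives tensoring with $\mathbb{Q}$) combined with the $A$-module structure reduces $\{1+c\sigma^{n}, 1+d\sigma^{n}\}$ to $\{1+cd\sigma^{n}, -1\}$, of form (a); for $j=1$, one checks $d\log(1+d\sigma) \equiv d \cdot d\sigma \pmod{\sigma}$, and since $d\sigma \equiv -d\log(1-\sigma) \pmod{\sigma}$ in $\Omega^{1}_{B/\mathbb{Q}}\otimes_{B} A$, the symbol reduces to form (b); and for $2 \le j \le n-1$, the same computation gives $d\log(1+d\sigma^{j}) \in \sigma \cdot \Omega^{1}_{B/\mathbb{Q}}$, so the symbol maps to zero and is absorbed into the span of (a) and (b). The main difficulty is thus to establish the generalized van der Kallen identification, for which the bookkeeping of step one is essential: one must verify that every lifted Steinberg and bilinearity identity introduces only errors of the form $\{1+c\sigma^{n}, v\}$, by systematically tracking the mismatches between products computed modulo $\sigma^{n}$ and modulo $\sigma^{n+1}$.
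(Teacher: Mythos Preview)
Your approach is genuinely different from the paper's, and the difference is instructive. You work with $B=A[\sigma]/\sigma^{n}$ and try to realize $\phi_{2,n}$ as the relative $K_{2}$ of the \emph{non-split} square-zero extension $R=A[\sigma]/\sigma^{n+1}\twoheadrightarrow B$, hoping to identify it with $\Omega^{1}_{B/\mathbb{Q}}\otimes_{B}A$ via a ``generalized van der Kallen'' statement. The paper instead applies van der Kallen's theorem to the \emph{larger} ring $R$ itself: since $(\sigma^{n})^{2}=0$ in $R$, the assignment $\varepsilon\mapsto\sigma^{n}$ defines an honest ring homomorphism $R[\varepsilon]/\varepsilon^{2}\to R$, and this induces a map $\pi:TK_{2,M}(R)\to\phi_{2,n}$. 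One then just pushes forward the generators of $TK_{2,M}(R)\cong\Omega^{1}_{R/\mathbb{Q}}$ coming from the decomposition $\Omega^{1}_{R}\cong(\Omega^{1}_{A}\otimes\mathbb{Q}[\sigma]/\sigma^{n+1})\oplus(A\otimes\Omega^{1}_{\mathbb{Q}[\sigma]/\sigma^{n+1}})$; the observation $\sigma\varepsilon\mapsto 0$ explains why the coefficients $c,e$ may be taken in $A$ rather than in $R$.

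The gap in your plan is exactly where you locate it, but it is more serious than ``bookkeeping''. Because $R$ is a non-trivial square-zero extension of $B$ (there is no $B$-algebra map in either direction between $R$ and $B[\varepsilon]/\varepsilon^{2}$), there is no ring homomorphism linking $TK_{2,M}(B)$ and $\phi_{2,n}$, so the identification $\phi_{2,n}\cong\Omega^{1}_{B/\mathbb{Q}}\otimes_{B}A$ cannot be read off from van der Kallen for $B$; it is a separate theorem you would have to prove from scratch. Your Step~2 reductions for $2\le j\le n-1$ and $j=n$ then rely on the \emph{injectivity} of that unproven identification (you conclude symbols vanish in $\phi_{2,n}$ because their images vanish in $\Omega^{1}_{B}\otimes_{B}A$), so the argument is not self-contained. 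The paper's device of moving up to $R$ and specializing $\varepsilon\mapsto\sigma^{n}$ is precisely what bypasses this: it produces the needed surjection from an object to which the classical van der Kallen theorem already applies, so no generalized statement is required.
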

\begin{proof}
 As $TK_{2,M}(A[\sigma]/\sigma^{n+1}) = ker \{K_{2,M}((A[\sigma]/\sigma^{n+1})[\varepsilon]/\varepsilon^2) \to K_{2,M}(A[\sigma]/\sigma^{n+1}) \}$  \\
we consider $\pi: TK_{2,M}(A[\sigma]/\sigma^{n+1}) \to \phi_{2,n}$ which is obtained by sending $\varepsilon \mapsto \sigma^n$.  We know that
\begin{equation}
{
\Omega^1_{A[\sigma]/\sigma^{n+1}} \cong (\Omega^1_A \otimes \mathbb{Q}[\sigma]/\sigma^{n+1}) \oplus (A \otimes \Omega^1_{\mathbb{Q}[\sigma]/\sigma^{n+1}})
}
\end{equation}
 and van der Kallen tells us that $TK_{2,M}(A[\sigma]/\sigma^{n+1})$ is generated by symbols $\{1+c\varepsilon, a\}$ and $\{ 1 + e \varepsilon , 1 - \sigma \}$ for $c,e \in A[\sigma]/\sigma^{n+1}$ and $a \in A^*$. The map $\varepsilon \mapsto \sigma^n$ sends $\sigma \varepsilon \mapsto 0$ implying surjectivity of $\pi$ finishing the corollary. 
 \end{proof}

We can extend the result to all $ p \geq 3$  i.e. $\phi_{p,n} = ker \{ K_{p,M}(A[\sigma]/\sigma^{n+1}) \to K_{p,M}(A[\sigma]/\sigma^n)$ is generated by symbols of the form $\{ 1 + c \sigma^n, u_1, ..., u_{p-1}\}$ and $\{1 + e \sigma^n, 1 - \sigma, u_2, ..., u_{p-1}\}$ where $u_i, e \in A^*$ and $c \in A$ as 
 \begin{equation}
{
\Omega^p_{A[\sigma]/\sigma^{n}} \cong (\Omega^p_A \otimes \mathbb{Q}[\sigma]/\sigma^{n}) \oplus (\Omega^p_A \otimes \Omega^1_{\mathbb{Q}[\sigma]/\sigma^{n}})
}
\end{equation}

Because $A$ is local, $e$ or $e+1$ is invertible, we have the following identity $\{1+ (1+e)\sigma, 1 - \sigma \} = \{ 1 + e \sigma , 1 - \sigma \} + \{ 1+ \sigma^n, 1 - \sigma \}$.  
 
Given that we now know the structure of $\phi_{p,n}$ we can formulate the following theorem 
\begin{theorem}

The map $\phi_{p,n} \to \Omega^{p-1}_A \otimes (\sigma^n / \sigma^{n+1})$ given by $\{ 1 + c \sigma^n, u_1,..., u_{p-1} \} \mapsto du_1/u_1 \wedge ... \wedge du_{p-1}/u_{p-1} \otimes c\sigma^n$ is an isomorphism of $A$ modules. 

\end{theorem}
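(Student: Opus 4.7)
The plan is to factor the stated map through van der Kallen's identification. From the proof of the preceding corollary we have a surjection $\pi \colon TK_{p,M}(A[\sigma]/\sigma^{n+1}) \twoheadrightarrow \phi_{p,n}$ induced by $\varepsilon \mapsto \sigma^{n}$, and van der Kallen's theorem applied to the local ring $A[\sigma]/\sigma^{n+1}$ identifies the source with $\Omega^{p-1}_{(A[\sigma]/\sigma^{n+1})/\mathbb{Q}}$. In analogy with decomposition (6), this module splits as
\[
\bigoplus_{i=0}^{n} \sigma^{i}\,\Omega^{p-1}_{A} \;\oplus\; \bigoplus_{j=0}^{n-1} \sigma^{j}\,d\sigma \wedge \Omega^{p-2}_{A},
\]
and the stated map will arise as the composition of $\pi^{-1}$ with projection onto the top summand $\sigma^{n}\,\Omega^{p-1}_{A} \cong \Omega^{p-1}_{A} \otimes (\sigma^{n}/\sigma^{n+1})$.

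For well-definedness of the prescribed formula on first-type generators, multilinearity in the $u_{i}$ slots is immediate, additivity in $c$ follows from $(1+c\sigma^{n})(1+c'\sigma^{n}) \equiv 1 + (c+c')\sigma^{n} \pmod{\sigma^{n+1}}$, and the Steinberg relation is preserved because $\frac{du_{i}}{u_{i}} \wedge \frac{du_{j}}{u_{j}} = 0$ whenever $u_{i} + u_{j} = 1$. Surjectivity is clear from the formula. Under $\pi$, the summands $\sigma^{i}\,\Omega^{p-1}_{A}$ with $i \geq 1$ are killed because the resulting symbols have first entry $1 + (\text{element of } \sigma^{n+1}) = 1$.

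The main obstacle is showing that the $d\sigma$-summand also lies in the kernel of $\pi$; equivalently, that every second-type generator $\{1+e\sigma^{n},\, 1-\sigma,\, u_{2}, \ldots, u_{p-1}\}$ vanishes in $\phi_{p,n}$. This reduces (via cup product with the $u_{2},\ldots,u_{p-1}$ slots) to the case $p = 2$, and is an instance of the standard rational Milnor $K$-theory vanishing $\{1+a, 1+b\} = 0$ whenever $a \cdot b = 0$, applied with $a = e\sigma^{n}$ and $b = -\sigma$ (so that $ab = -e\sigma^{n+1} = 0$). The identity preceding the theorem, combined with the invertibility of one of $e$ or $e+1$ in the local ring $A$, reduces the general case to the one handled by this vanishing. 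Once the $d\sigma$-summand is shown to die under $\pi$, the map factors through the $i = 0$ summand $\Omega^{p-1}_{A}$, yielding an isomorphism $\Omega^{p-1}_{A} \xrightarrow{\sim} \phi_{p,n}$; composing with the canonical identification $\Omega^{p-1}_{A} \cong \Omega^{p-1}_{A} \otimes (\sigma^{n}/\sigma^{n+1})$ recovers the map in the statement.
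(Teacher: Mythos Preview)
Your overall architecture is right: the proof hinges on showing that the second-type generators $\{1+e\sigma^{n},\,1-\sigma,\,u_{2},\ldots,u_{p-1}\}$ vanish in $\phi_{p,n}$, and everything else follows from van der Kallen. The gap is in how you dispose of this step. You invoke a ``standard rational Milnor $K$-theory vanishing $\{1+a,1+b\}=0$ whenever $a\cdot b=0$,'' but this statement is neither standard nor true. Take $R=\bigl(k[x,y]/(xy)\bigr)_{(x,y)}$ with $\mathrm{char}\,k=0$; then $xy=0$, yet under the $d\log$ map $K_{2,M}(R)\to\Omega^{2}_{R/k}$ the symbol $\{1+x,1+y\}$ is sent to $\dfrac{dx\wedge dy}{(1+x)(1+y)}$, whose class in $\Omega^{2}_{R/k}\cong R/(x,y)\cong k$ is nonzero. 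So $\{1+x,1+y\}\neq 0$ even after tensoring with $\mathbb{Q}$.

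The specific vanishing you need, $\{1+e\sigma^{n},1-\sigma\}=0$ in $K_{2,M}(A[\sigma]/\sigma^{n+1})$, is exactly the substantive content of the theorem, and the paper does not treat it as a formality. Its argument passes to $K_{2,M}(A[[\sigma]][1/\sigma])$, where the Steinberg relation $\{1+c\sigma^{n+1},\,c\sigma^{n+1}\}=0$ is available (because $\sigma$ is now a unit), manipulates this into an identity among symbols that live in $K_{2,M}(A[[\sigma]])$, and then invokes Kerz's injectivity theorem to conclude that the identity already holds in $K_{2,M}(A[[\sigma]])$. Only then does projection to $A[\sigma]/\sigma^{n+1}$ yield $(n+1)\{1-\sigma,\,1-c\sigma^{n}\}=0$, hence the desired vanishing rationally. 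The passage through the Laurent ring and the appeal to Kerz are not decoration; they are what lets you access a Steinberg relation involving the non-unit $\sigma$. Your proposal needs to supply an argument of comparable strength at this point.
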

\begin{proof} The idea works upon Bloch's ideas \cite{B}, and what we would have to do is show that symbols of the form $\{ 1 + e\sigma^n, 1 - \sigma\}$ where $e \in  A$ vanish for $p=2$ case and the higher cases follow analogously.

We begin by looking at the group $K_{2,M}(A[[\sigma]][1/\sigma])$ where $\{ 1+c\sigma^{n+1}, c \sigma^{n+1}\} =0$ which gives us $(n+1)\{1 + c \sigma^{n+1}, \sigma \} =  - \{ 1+ c\sigma^{n+1}, c\}$. 
We also have \par
$\{ 1 + c \sigma^{n+1}, \sigma \} = \{1 - \sigma (1 + c\sigma^{n+1} - c \sigma^n), \sigma \} = \{ (1-\sigma)(1+c\sigma^{n+1}), 1+c\sigma^{n+1} - c\sigma^n \}$. This gives us that 
\begin{equation} \label{eq}
{
\{ 1 + c\sigma^{n+1}, c\} = (n+1)\{ (1-\sigma)(1+c\sigma^{n+1}), 1 + c\sigma^{n+1} - c \sigma^n \}
}
\end{equation}
Then all one would have to do is show that Eq.~\ref{eq} survives in $K_{2,M}(A[[\sigma]])$. For this we can use M. Kerz's result in his thesis \cite{MK}
\begin{theorem}[Kerz]
 Let $A$ be a regular connected semi-local ring containing a field with quotient field $F$. Assume that each residue field of $A$ has more than $M_n$ elements. Then the map 
$i_n : K_{n,M}(A) \to K_{n,M}(F)$ is universally injective. 
\end{theorem}

What the above theorem tells us is that if we have $R$-local ring and $\kappa_R$ the residue field of $R$ s.t $\text{char}(\kappa_R)=0$ and $g \in R$ the canonical map $K_{p,M}(R) \to K_{p,M}(R_g)$ is injective. In our case we have that $R=A[[\sigma]]$ and $g=\sigma$ and hence equation~\ref{eq} survives in $K_{2,M}(A[[\sigma]])$. 

Now we can project using $Eq.7$ under the surjective map induced by Milnor-K theory by the canonical projection of $A[[\sigma]] \to A[\sigma]/\sigma^{n+1}$ and we get the following in $K_{2,M}(A[\sigma]/\sigma^{n+1})$ i.e 
\begin{equation}
{
0 = \{1,c\} = (n+1) \{1-\sigma, 1 - c \sigma^n \} = \{ 1-\sigma, (1-c\sigma^n)^{n+1}\} = \{1-\sigma, 1-(n+1)c\sigma^n \}
}
\end{equation}
which is what we wanted and hence solves the entire theorem i.e. telling us that  $\phi_{p,n}$ is generated by symbols of the form $\{1+c\sigma^n, a\}$. 
\end{proof} 

In our case $\sigma$ is an element of $A$.  The previous construction also leads to the correct result because for any transcendental $\lambda$ we have the map $\tau : A[\lambda]/\lambda^n \to A/\sigma^n$ under the map $\tau(\lambda) = \sigma$ is surjective and also remains surjective in Milnor K-theory. Hence the isomorphism for $\sigma \in A$ will be as follows: 
\begin{equation}
{
\phi_{p,n} = ker \{ K_{p,M}(A/\sigma^{n+1}) \to K_{p,M}(A/\sigma^n) \} \cong \Omega^{p-1}_{(A/\sigma)/\mathbb{Q}} \otimes_{A/\sigma} (\sigma^n/\sigma^{n+1})
}
\end{equation}
This gives us what we sought out to find. Now we head back to the main problem, i.e.
\begin{equation}
{
H^q(\widehat{X}, \varprojlim \mathcal{K}_{p,M}(\mathcal{O}_{L_n})) \to H^q(L, \mathcal{K}_{p,M}(\mathcal{O}_L))
}
\end{equation}
is an isomorphism.
We start by taking the exact sequence 
\begin{equation}
{
0 \to \bigoplus_{r+s = p-1} \Omega^r_{k/\mathbb{Q}} \otimes \Omega^s_{L/k} \to \Omega^{p-1}_{\mathcal{O}_L/\mathbb{Q}} \to \Omega^{p-1}_{\mathcal{O}_L/k} \to 0
}
\end{equation}
As $L$ is ample we have $\mathcal{I}^n/\mathcal{I}^{n+1} = \mathcal{O}_L(-nL)$ where we denote by $\mathcal{I}$ the ideal sheaf of $L$. So tensoring the above sequence with $\mathcal{O}_L(-nL)$ and observing that $\mathcal{O}_L(-nL)$ is dual to the ample divisor $L$ we can use the Kodaira Vanishing Theorem along with Serre Duality which gives us 
\begin{equation}
{
H^q(L, \Omega^r_{L/k} \otimes \mathcal{O}_L(-nL))=0
}
\end{equation}
when $q+r \leq \dim(L) -1$. Now as $\Omega^r_{k/\mathbb{Q}}$ is a constant sheaf we have that $H^q(L, \Omega^r_{k/\mathbb{Q}}\otimes \mathcal{W}) = \Omega^r_{k/\mathbb{Q}} \otimes H^q(L,\mathcal{W})$ for any sheaf $\mathcal{W}$. Taking the long cohomology sequence we get 
\begin{equation}
{
H^q(L, \Omega^{p-1}_{\mathcal{O}_L/\mathbb{Q}} \otimes \mathcal{O}_L(-nL))=0
}
\end{equation}
when $p+q \leq \dim(L)$. 
\par
From the long cohomology induced by 
\begin{equation}
{
0 \to \Phi_{p,n} \to \mathcal{K}_{p,M}(\mathcal{O}_{L_{n+1}}) \to \mathcal{K}_{p,M}(\mathcal{O}_{L_n}) \to 0
}
\end{equation}
and from the $\mathcal{O_L}$ module isomorphism in Theorem 2 and from the above Eq 14 we have that 
\begin{equation}
{
H^q(L, \mathcal{K}_{p,M}(\mathcal{O}_{L_{n+1}})) \to H^q(L, \mathcal{K}_{p,M}(\mathcal{O}_{L_n}))
}
\end{equation} 
is an isomorphism for $p=q \leq dim(L) -1 $ and is injective for $p+q < \dim(L)$. Now we observe the natural map
\begin{equation}
{
H^q(\widehat{X}, \varprojlim \mathcal{K}_{p,M}(\mathcal{O}_{L_n})) \to \varprojlim H^q(L, \mathcal{K}_{p,M}(\mathcal{O}_{L_n}))
}
\end{equation}
satisfies the conditions given in EGA-III \cite[Chapter 0, Theorem 13.3]{G}. We state if for convenience
\begin{theorem}[Mittag-Leffler and Cohomology]
{
Let $X$ be a topological space, with $(\mathcal{F}_{k})_{k \in \mathbb{N}}$ a projective system of sheaves of abelian groups on $X$ and let $\mathcal{F} = \varprojlim \mathcal{F}_k$ Assume the following conditions are satisfied:\\
(1) There is a base $\mathcal{B}$ of the topology of $X$ such that for all $U \in \mathcal{B}$ and $i \geq 0$ the projective system $(H^i(U,\mathcal{F}_k))_{k \in \mathbb{N}}$ satisfies Mittag-Leffler. \\ 
(2) For every $x \in X$ and every $i > 0$, $\varinjlim_U ( \varprojlim_k H^i(U))=0$ where $U$ runs over all neighbourhoods of $x$ belonging to $\mathcal{B}$. \\
(3) The homomorphisms $u_{h,k} : \mathcal{F}_k \to \mathcal{F}_h \; (h \leq k)$  defining the projective system $(\mathcal{F}_k)$ are surjective. 
\\
Under these conditions, for all $i>0$, the canonical homomorphism 
\begin{equation}
{
h_i : H^i(X,\mathcal{F}) \to \varprojlim H^i(X,\mathcal{F}_k)
}
\end{equation}
is surjective. If in addition for a value of $i$ the projective system $(H^{i-1}(X,\mathcal{F}_k))_{k \in \mathbb{N}}$ satisfies Mittag Leffler, then $h_i$ is bijective.
}
\end{theorem}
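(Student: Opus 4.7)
The plan is to produce, for each $i \geq 1$, a short exact sequence of Milnor type
\begin{equation*}
0 \to \varprojlim{}^1 H^{i-1}(X, \mathcal{F}_k) \to H^i(X, \mathcal{F}) \to \varprojlim H^i(X, \mathcal{F}_k) \to 0,
\end{equation*}
whose right-hand map is exactly $h_i$. Surjectivity of $h_i$ is then automatic, and bijectivity holds precisely when the left-hand term vanishes, which is the case when the inverse system $(H^{i-1}(X,\mathcal{F}_k))_k$ satisfies Mittag-Leffler. So the proof reduces to constructing such a sequence.

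I would construct it via \v{C}ech cohomology on the base $\mathcal{B}$. Condition (1) at $i=0$ directly gives Mittag-Leffler for the inverse system of sections $(\mathcal{F}_k(U))_k$ for every $U\in\mathcal{B}$, and combined with condition (3) this implies that the sheaf-theoretic inverse limit $\mathcal{F}$ agrees with the presheaf limit on $\mathcal{B}$ and that the derived sheaf $\varprojlim{}^1 \mathcal{F}_k$ vanishes. Condition (2) then ensures that \v{C}ech cohomology with respect to $\mathcal{B}$-covers computes the sheaf cohomology both of $\mathcal{F}$ and of each $\mathcal{F}_k$.

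I would then fix a $\mathcal{B}$-cover $\mathcal{U}$ of $X$ and observe that the \v{C}ech complex $\check{C}^\bullet(\mathcal{U},\mathcal{F})$ is the termwise inverse limit of the \v{C}ech complexes $\check{C}^\bullet(\mathcal{U},\mathcal{F}_k)$. Because each $\check{C}^p(\mathcal{U},\mathcal{F}_k)$ is a product over simplices of section groups $\mathcal{F}_k(U_{\alpha_0 \cdots \alpha_p})$, each of which forms a Mittag-Leffler inverse system, the standard exact sequence
\begin{equation*}
0 \to \varprojlim_k \check{C}^p(\mathcal{U},\mathcal{F}_k) \to \prod_k \check{C}^p(\mathcal{U},\mathcal{F}_k) \xrightarrow{1-\mathrm{shift}} \prod_k \check{C}^p(\mathcal{U},\mathcal{F}_k) \to 0
\end{equation*}
is exact termwise, giving a short exact sequence of cochain complexes whose long exact sequence in cohomology is precisely the desired Milnor sequence at the \v{C}ech level.

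The final and most delicate step is passing from the \v{C}ech Milnor sequence on each cover to one on $X$ by taking the direct limit over refinements of $\mathcal{B}$-covers. Direct limits do not preserve Mittag-Leffler in general, so one must exploit condition (1) at every cohomological degree $i$, not only at $i=0$, to control the inverse systems of \v{C}ech cohomologies under refinement, and then invoke condition (2) to guarantee that the resulting direct limit indeed coincides with sheaf cohomology on $X$. This compatibility of the two limiting processes is the step I expect to be the main obstacle.
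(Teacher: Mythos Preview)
The paper does not give its own proof of this theorem: it is quoted from EGA~III, \S0, Th\'eor\`eme~13.3.1, and stated only ``for convenience''. There is therefore no in-paper argument to compare your proposal against.

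On the merits of your sketch: the Milnor-sequence strategy is the right overall shape and is close in spirit to Grothendieck's argument. The step I would flag as not yet justified is the assertion that condition~(2) forces \v{C}ech cohomology on $\mathcal{B}$-covers to compute sheaf cohomology \emph{of each individual} $\mathcal{F}_k$. Condition~(2) controls only the stalks of the presheaf $U \mapsto \varprojlim_k H^i(U,\mathcal{F}_k)$; it says nothing about a single $H^i(U,\mathcal{F}_k)$, and the hypotheses nowhere assert $H^i(U,\mathcal{F}_k)=0$ for $U\in\mathcal{B}$ and $i>0$. Grothendieck's proof in EGA does not assume this either: it runs the Cartan--Leray spectral sequence $\check{H}^p(\mathcal{U},\underline{H}^q(\mathcal{F}_k)) \Rightarrow H^{p+q}(X,\mathcal{F}_k)$ simultaneously in $k$, uses condition~(1) to obtain Mittag--Leffler termwise on the $E_2$-page, and uses condition~(2) to kill the inverse limit of the $q>0$ contributions after passing to the colimit over refinements. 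Your outline can be completed along those lines, but the shortcut ``\v{C}ech equals derived for each $\mathcal{F}_k$'' is not available from the stated hypotheses. A second, smaller point: your Mittag--Leffler argument at the cochain level needs $(\mathcal{F}_k(V))_k$ to be ML for every finite intersection $V$ of members of the cover, so you should either assume $\mathcal{B}$ is closed under finite intersections or refine to such a cover.
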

Both $(\mathcal{K}_{p,M}(\mathcal{O}_{L_n}))_n$ and $H^r(L, \mathcal{K}_{p,M}(\mathcal{O}_{L_n}))$ for $r+p \leq dim(L)-1$ are injective and hence we can take the basis of the topology consisting of affine open sets trivializing the locally free sheaf 
\begin{equation}
{
\Omega^{p-1}_{\mathcal{O}_L/k} \otimes \mathcal{O}_L(-nL) 
}
\end{equation}
which for all $n$ clearly exists. For $U$ such an open set 
\begin{equation}
{
H^i(U, \Omega^{p-1}_{\mathcal{O}_L/k} \otimes \mathcal{O}_L(-nL) |_U) = 0
}
\end{equation}
when $i \geq 1$ and $n \geq 1$. This gives us that $H^q(U, \mathcal{K}_{p,M}(\mathcal{O}_{L_n}))_n$ is a system of isomorphic groups.
The above argument gives us that 
\begin{equation}
{
\varprojlim H^q(L, \mathcal{K}_{p,M}(\mathcal{O}_{L_n})) \cong H^q(\widehat{X}, \varprojlim \mathcal{K}_{p,M}(\mathcal{O}_{L_n}))
}
\end{equation}
from Eq. 15, we have $H^q(\widehat{X}, \varprojlim \mathcal{K}_{p,M}(\mathcal{O}_{L_n})) \cong H^q(L,\mathcal{K}_{p,M}(\mathcal{O}_{L_n}))$, for $p+q \leq dim(L)-1$. Taking inverse limits we have 
\begin{equation}
{
H^q(\widehat{X}, \varprojlim \mathcal{K}_{p,M}(\mathcal{O}_{L_n})) \cong H^q(L, \mathcal{K}_{p,M}(\mathcal{O}_L))
}
\end{equation}
Similarly when $p+q = dim(L)$ we have injection 
\begin{equation}
{
H^q(L, \mathcal{K}_{p,M}(\mathcal{O}_{L_{n+1}})) \hookrightarrow H^q(L, \mathcal{K}_{p,M}(\mathcal{O}_{L_n}))
}
\end{equation} 
for all $n$. Consequently we have 
\begin{equation}
{
\varprojlim H^q(L, \mathcal{K}_{p,M}(\mathcal{O}_{L_n})) \hookrightarrow H^q(L, \mathcal{K}_{p,M}(\mathcal{O}_L))
}
\end{equation}
giving the injection and answering the question.
\\
Hence we can say that 
\begin{equation}
{
H^q(\widehat{X}, \varprojlim \mathcal{K}_{p,M}(\mathcal{O}_{L_n})) \cong H^q(L, \mathcal{K}_{p,M}(\mathcal{O}_L))
}
\end{equation}
is an isomorphism for all $p+q < dim(L)-1$ and is injective when $p+q = dim(L)$.
\par
Hence we have shown
\begin{theorem}
{
Let us consider $X/k$ a smooth algebraic variety of $dim \; X \geq 4$ and an ample divisor $L$ where $char(k)=0$. If we denote by $\mathcal{K_*}$ the $K$-theoretic sheaf and $\widehat{X}$ the formal completion of $X$ along $L$ then 
\begin{equation}
{
H^q(\widehat{X}, \varprojlim \mathcal{K}_{p}(\mathcal{O}_{L_n})) \to H^q(L, \mathcal{K}_{p}(\mathcal{O}_L))
}
\end{equation}
is an isomorphism for all $p+q < dim(L)-1$ and is injective when $p+q = dim(L)$.
}
\end{theorem}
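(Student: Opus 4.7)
The plan is to reduce the statement on the formal completion $\widehat{X}$ to the tower of infinitesimal thickenings $L_n$, control the cohomology of the graded pieces using the structure of $\Phi_{p,n}$ established above, and then pass to the limit via an EGA III style Mittag--Leffler comparison. The starting point is the short exact sequence
\begin{equation*}
0 \to \Phi_{p,n} \to \mathcal{K}_{p,M}(\mathcal{O}_{L_{n+1}}) \to \mathcal{K}_{p,M}(\mathcal{O}_{L_n}) \to 0,
\end{equation*}
together with the globalized form of Theorem 2, which identifies $\Phi_{p,n}$ as an $\mathcal{O}_L$-module with $\Omega^{p-1}_{\mathcal{O}_L/\mathbb{Q}} \otimes_{\mathcal{O}_L} \mathcal{I}^n/\mathcal{I}^{n+1}$, where $\mathcal{I}$ is the ideal sheaf of $L$ in $X$.

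Next, I would extract the numerical content from Kodaira vanishing. Since $L$ is ample, $\mathcal{I}^n/\mathcal{I}^{n+1} \cong \mathcal{O}_L(-nL)$ is the dual of an ample line bundle, so Kodaira vanishing combined with Serre duality yields $H^q(L, \Omega^s_{L/k} \otimes \mathcal{O}_L(-nL)) = 0$ for $q+s \leq \dim(L)-1$. To transfer this vanishing from $\Omega^{s}_{L/k}$ to the relative differentials $\Omega^{p-1}_{\mathcal{O}_L/\mathbb{Q}}$, I would use the split short exact sequence
\begin{equation*}
0 \to \bigoplus_{r+s=p-1} \Omega^r_{k/\mathbb{Q}} \otimes \Omega^s_{L/k} \to \Omega^{p-1}_{\mathcal{O}_L/\mathbb{Q}} \to \Omega^{p-1}_{\mathcal{O}_L/k} \to 0
\end{equation*}
and the fact that $\Omega^r_{k/\mathbb{Q}}$ is a constant sheaf, so it pulls out of the cohomology functor. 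This gives $H^q(L, \Phi_{p,n}) = 0$ throughout the range $p+q \leq \dim(L)$, uniformly in $n$.

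Feeding this vanishing into the long exact sequence of the tower then forces the restriction maps $H^q(L, \mathcal{K}_{p,M}(\mathcal{O}_{L_{n+1}})) \to H^q(L, \mathcal{K}_{p,M}(\mathcal{O}_{L_n}))$ to be isomorphisms in the range $p+q \leq \dim(L)-1$ and injections at $p+q = \dim(L)$, again uniformly in $n$. To compare with the formal cohomology over $\widehat{X}$, I would verify the hypotheses of the EGA III Mittag--Leffler and cohomology theorem (stated above) on a base of the topology of $L$ consisting of affine opens that trivialize $\Omega^{p-1}_{\mathcal{O}_L/k} \otimes \mathcal{O}_L(-nL)$ simultaneously for all $n$. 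On such opens the higher cohomology of $\Phi_{p,n}$ vanishes, so the systems $\{H^q(U, \mathcal{K}_{p,M}(\mathcal{O}_{L_n}))\}_n$ are directed systems of isomorphisms, trivially Mittag--Leffler, and their directed colimits of inverse limits vanish on stalks; surjectivity of the transition maps is built into the construction. Taking $\varprojlim$ of the comparison maps at each level then delivers the stated isomorphism for $p+q < \dim(L)-1$ and the injection at $p+q = \dim(L)$.

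The main obstacle I expect is the bookkeeping in step two: the differentials $\Omega^{p-1}_{\mathcal{O}_L/\mathbb{Q}}$ mix the geometric part $\Omega^s_{L/k}$ (where Kodaira vanishing bites) with the arithmetic part $\Omega^r_{k/\mathbb{Q}}$ (which merely behaves like a constant coefficient), and one must ensure that tensoring with the negative line bundle $\mathcal{O}_L(-nL)$ and passing to cohomology commute cleanly with the filtration so as to preserve the full vanishing range. A secondary subtlety is the verification of the Mittag--Leffler hypotheses on a base rather than globally, which forces a careful choice of affine trivializing cover compatible with all $n$ at once.
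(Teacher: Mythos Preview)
Your proposal is correct and follows essentially the same route as the paper: identify $\Phi_{p,n}$ with $\Omega^{p-1}_{\mathcal{O}_L/\mathbb{Q}}\otimes\mathcal{O}_L(-nL)$ via Theorem~2, use the filtration of $\Omega^{p-1}_{\mathcal{O}_L/\mathbb{Q}}$ together with Kodaira vanishing and the constancy of $\Omega^r_{k/\mathbb{Q}}$ to kill $H^q(L,\Phi_{p,n})$ in the stated range, feed this into the long exact sequence of the tower, and then invoke the EGA~III Mittag--Leffler comparison on a base of affine opens trivializing the relevant locally free sheaves. Your numerical bookkeeping (isomorphism for $p+q\le\dim(L)-1$, injection at $p+q=\dim(L)$) is in fact slightly cleaner than the paper's own statement of the intermediate step.
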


\textbf{Acknowledgements:} I would like to thank Prof. Fedor Bogomolov from NYU and Prof. Johan de Jong from Columbia University for teaching me Algebraic Geometry and Algebraic $K$-Theory. I would like to thank Federico Buonerba for bringing this problem to my attention and am aware that he has worked on it and possibly has a different proof. I would also like to mention Prof. Vasudevan Srinivas and Prof. Amalendu Krishna for helping to formulate the problem better. Prof. Madhav Nori of the University of Chicago brought to my attention that Patel and Ravindra \cite{PR} had been working on the same problem. But our proofs differ. 
\\
\\

\textbf{Jagannathan Arjun Sathyamoorthy}. Contact email : arjun.jas@gmail.com

\end{document}